\newtheorem{theorem}{Theorem}[section]
\newtheorem{proposition}{Proposition}[section]
\renewcommand{\O}{\text{O}}
\begin{document}

\begin{titlepage}
  \begin{center}
    {\Large \textbf{Decomposing Linear Representations of Finite Groups}}

    \vspace{3cm}

    \includegraphics[width=32mm]{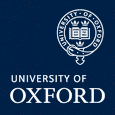}

    \vspace{3cm}

    {\large
    Kaashif Hymabaccus \par
    Wadham College \par
    University of Oxford \par
    \vspace{1cm}
    Supervisor: Dr Dmitrii Pasechnik
    }

    \vspace{3cm}

    \emph{Final Honour School of Mathematics and Computer Science Part C} \par
    Trinity 2019 \par

  \end{center}
\end{titlepage}

\newpage

\section*{Abstract}

We develop a package using the computer algebra system GAP for
computing the decomposition of a representation $\rho$ of a finite
group $G$ over $\mathbb{C}$ into irreducibles, as well as the
corresponding decomposition of the centraliser ring of
$\rho(G)$. Currently, the only open-source programs for decomposing
representations are for non-zero characteristic fields. While methods
for characteristic zero are known, there are no open-source computer
programs that implement these methods, nor are details on how to
achieve good performance of such an implementation published. We aim
to record such details and demonstrate an application of our program
in reducing the size of semidefinite programs.

\newpage

\tableofcontents
\newpage

\section{Introduction} \label{sec:introduction}
\subsection{Motivation and Requirements}

In 1971, in his graduate textbook on the linear representations of
finite groups \cite{serre_2014}, Serre specified a method of computing
the decomposition of a representation of a finite group $G$ into
irreducible subrepresentations. Despite the existence of Serre's text,
this algorithm has no open source implementation. Indeed, there is no
open source program solving this problem in general at all. This is
the problem our project aims to solve, using the computer algebra
systems GAP \cite{GAP4} and SageMath \cite{sagemath}.

Specifically, we aim to provide a GAP package providing the
functionality to compute the following, given a finite dimensional
representation of a finite group $\rho : G \to \text{GL}(V)$, where
$V$ is a $\mathbb{C}$-vector space.

\begin{itemize}
\item A decomposition of $V$ into irreducible subrepresentations
\item The corresponding block diagonalisation of $\rho$, along with
  the associated basis change matrix
\item A basis for the centraliser ring $C_\rho \subseteq
  \text{End}(V)$
\end{itemize}

We only deal with the case where the representation $\rho$ is over
$\mathbb{C}$ - in particular, this means we are in characteristic
zero. In the case where the representation is over a finite field, a
user could take advantage of the Meataxe algorithms, introduced by
Parker \cite{parker_1984} and later improved by various authors. These
algorithms have been implemented in GAP and allow computations of
decompositions of modules, tests of irreducibility, isomorphisms
between modules and so on. None of these methods are applicable in the
case of a characteristic zero field, which is what we focus on in this
project.

We also aim to extensively test and document our package to ensure
correctness and ease of use. After the completion of the project, our
goal is for this package to be included with the GAP distribution.

\subsection{Outline}

In Section \ref{sec:background}, we provide some background on
representation theory, group theory, and some terminology (some of
which is not standardised) that will be used throughout this project.

In Section \ref{sec:serre}, we describe Serre's algorithms to
decompose a representation of a finite group, with some discussion of
performance and possible improvements.

In Section \ref{sec:mymethod}, we describe an alternate algorithm of
our own design, incorporating some results due to Serre and various
small performance enhancements.

In Section \ref{sec:performance}, we describe our testing and
benchmarking methodology. We analyse the performance, focusing on the
effect of the size of $G$ and the degree of the representation $\rho$
on the running time, as well as measuring the effect of our
optimisations.

In Section \ref{sec:crossing}, we describe an application of our
program to reducing the dimension of semidefinite programs, using a
method due to de Klerk et al. \cite{deKlerk2007}. We reproduce their
calculation of a bound on the crossing number of $K_{m,n}$ (a complete
bipartite graph) using our method to formulate the program.

In Section \ref{sec:conclusion}, we describe how our contributions
satisfy the requirements, possible improvements, and further work that
could be done on this project.

In the appendix Section \ref{sec:unitary}, we implement an algorithm
to unitarise representations using methods developed while
implementing the main algorithms of this project. We also describe an
algorithm due to Dixon \cite{dixon_1970} for decomposing unitary
representations that has some desirable properties. This section is
not a part of the requirements set out earlier, but is a possibly
useful consequence of our work.

\section{Background} \label{sec:background}

In this section, we provide some background information required to
understand the results presented in this project. A basic
understanding of group theory and linear algebra is assumed, but
nothing more.

\subsection{Representation Theory} \label{sec:reptheory}

\newcommand{\GL}{\mbox{GL}}

Let $G$ be a finite group. A \emph{representation} of $G$ is a
homomorphism $\rho$ from $G$ to the automorphism group of a vector
space, $\GL(V)$. We will only consider the case where $V$ is a
finite-dimensional vector space over $\mathbb{C}$.

A common abuse of terminology is to refer to $V$, the vector space, as
a representation of $G$. This is done only when it is clear what the
action of $G$ is.

Let $\rho : G \to \GL(V)$ be a representation of $G$.

$\rho$ is \emph{isomorphic as a representation} to $\tau : G \to
\GL(W)$ if there is a linear isomorphism $\alpha : V \to W$ such that
for all $g \in G$, $\alpha \circ \rho(g) = \tau(g) \circ \alpha$. In
other words, $\rho$ and $\tau$ differ by a change of basis.

A \emph{subrepresentation} of $\rho$ is a representation $\rho|_W : G
\to \GL(W)$, where $W$ is a subspace of $V$ which is invariant under
the action of $G$. The action on $W$ is given by $\rho|_W(g)w =
\rho(g)|_W w$. This is well defined because $W$ is preserved by
$\rho(g)$.

A representation $\rho$ is said to be \emph{irreducible} if it has no
subrepresentations other than $0$ and $V$. Note that, for finite group
representations over $\mathbb{C}$, this is equivalent to saying that
$V$ does not break down into a direct sum of subrepresentations
\cite{serre_2014}. We will sometimes refer to an ``irreducible
representation'' as an ``irreducible''.

Let $W_1, \ldots, W_n$ be the complete list of irreducibles of $G$. If
$V = \bigoplus\limits_{i=1}^m U_i$ is the decomposition of $V$ into
irreducibles, then the \emph{canonical decomposition} of $V$ is
$\bigoplus\limits_{k=1}^n V_k$ where each $V_k$ is the direct sum of the $U_i$
which are isomorphic as representations to $W_k$.

The \emph{centraliser} (sometimes called the \emph{commutant}) of a
representation $\rho$ is the $\mathbb{C}$-vector space of linear maps
$A$ such that $A\rho(g) = \rho(g)A$ for all $g \in G$.

Given $\rho$, the dual representation $\rho^*$ is the
representation defined by:
\[
\rho^*(g) = \rho(g^{-1})^T
\]

\subsection{Orbitals}

A more detailed presentation of some of these definitions, with
illustrative examples, can be found in P. Cameron's \emph{Permutation
  Groups} \cite{cameron_1999}.

Frequently, it will be convenient to consider a representation $\rho :
G \to \GL(V)$ where all of the images $\rho(g)$ are permutation
matrices as a map $\rho : G \to \mbox{Sym}(X)$, where $X$ is a basis
for $V$. This is a group action of $G$ on $X$.

An \emph{orbital} of this action is an orbit of $G$ acting on $X
\times X$ by: $g \cdot (x,y) = (g \cdot x, g \cdot y)$.

The \emph{orbital (di)graph} associated with an orbital $\Delta$ is
the directed graph with vertex set $X$ and directed edge set $\Delta$.

\begin{center}
\begin{figure}[h]
\begin{center}
\begin{minipage}{0.3\textwidth}
\begin{tikzpicture}
  \Vertex{2}
  \WE(2){1}
  \EA(2){3}
  \tikzset{EdgeStyle/.append style = {->}}
  \Loop[dist=1cm, dir=NO](1.north)
  \Loop[dist=1cm, dir=NO](2.north)
  \Loop[dist=1cm, dir=NO](3.north)
  \draw[dotted,red] ($(current bounding box.south east)+(6pt,-6pt)$) rectangle ($(current bounding box.north west)+(-6pt,6pt)$);
\end{tikzpicture}
\end{minipage}
\begin{minipage}{0.3\textwidth}
\begin{tikzpicture}
  \Vertex{2}
  \SOWE(2){1}
  \SOEA(2){3}
  \tikzset{EdgeStyle/.append style = {->, bend left = 10}}
  \Edge(1)(2)
  \Edge(2)(1)
  \Edge(1)(3)
  \Edge(3)(1)
  \Edge(2)(3)
  \Edge(3)(2)
  \draw[dotted,red] ($(current bounding box.south east)+(6pt,-6pt)$) rectangle ($(current bounding box.north west)+(-6pt,6pt)$);
\end{tikzpicture}
\end{minipage}
\end{center}
\caption{$S_3$ acting on $\{1,2,3\}$ has two orbital digraphs}
\end{figure}
\end{center}

When we refer to an orbital as a matrix, this refers to the adjacency
matrix of the orbital graph.

Computing with the full adjacency matrices is inconvenient due to
their space inefficiency. It is more convenient to work with collapsed
versions of these matrices, which allow computations to be done much
faster.

Let $\alpha \in X$ and fix an ordering of the orbits of $G_\alpha$
(the stabiliser of $\alpha$): $X_1 = \{ \alpha \}, X_2, \ldots,
X_r$. Choose representatives $\alpha_i \in X_i$. Let $\Gamma = (X,
\Delta)$ be an orbital graph.

A \emph{collapsed adjacency matrix} (in the sense of Praeger and
Soicher \cite{praeger_soicher_1996}) for $\Gamma$, with respect to the
choice of $\alpha$, representatives, and orderings, is the $r \times r$
integer matrix $A$ defined by:

\[
A_{ij} = |\Gamma(\alpha_i) \cap X_j|
\]

That is, $A_{ij}$ is the number of neighbours of $\alpha_i$ in the
graph $\Gamma$ which are contained in $X_j$.

These collapsed matrices are useful because they provide a natural
isomorphism from the algebra generated by orbital matrices $X$ to a
smaller-dimensional algebra $Y$, given by the collapsed orbital
matrices. This means that whenever we have a problem in $X$ that can
be formulated in terms of spectra of matrices, we can pass to $Y$
using the isomorphism and solve the same problem in $Y$. A detailed
explanation of the isomorphism can be found in \cite{deKlerk2007}, and
an abbreviated explanation in Section \ref{sec:crossing}, where we use
this method to reduce the dimension of a semidefinite program.

\subsection{Cyclotomic numbers} \label{sec:cyclo}

\emph{Cyclotomic numbers} are the numbers $z \in \mathbb{C}$ such that
$z \in \mathbb{Q}(\zeta_n)$ for some $n$, where $\zeta_n$ is a complex
primitive $n$th root of unity. $\mathbb{Q}(\zeta_n)$ is called a
\emph{cyclotomic field}.

Brauer \cite{brauer_1945} proved that every irreducible complex
character of a finite group $G$ can be realised by a representation of
$G$ over the cyclotomic field $\mathbb{Q}(\zeta_x)$, where $x$ is the
exponent of $G$ (the smallest $x$ such that for all $g \in G$, $g^x =
1_G$). This means that if we restrict our attention to representations
with cyclotomic coefficients, we get all representations of $G$, up to
isomorphism.

In GAP (the computer algebra system that we use), we can represent
general algebraic numbers, but the algorithms for performing
arithmetic with these is not efficient when compared with cyclotomic
numbers. Due to Brauer's theorem, we do not need to consider
non-cyclotomic algebraic numbers.

Throughout this project, when we take a representation over
$\mathbb{C}$, we assume this representation is over the cyclotomic
numbers.

\subsection{Computer algebra systems used}

To implement this project, we used two computer algebra systems: GAP
\cite{GAP4} and SageMath \cite{sagemath}.

The main part of the project is a GAP package, implemented in the GAP
programming language, which is designed to be easy to read for
mathematicians and programmers alike. We will refrain from using
non-obvious features in code snippets without explanation. A reference
for the programming language can be found here:
\url{https://www.gap-system.org/Manuals/doc/ref/chap4.html}.

Some parts of the project are implemented using SageMath, which uses
the Python programming language. This was necessary since certain
algorithms are only implemented in SageMath, like the computation of a
complete list of irreducibles of $S_n$ using integer matrices. SageMath also
has a convenient interface to both GAP and a semidefinite program
solver, making it an ideal choice to implement Section
\ref{sec:crossing}, which explores how our project can be used to
speed up computations by finding the optimal block diagonalization for
a semidefinite program.

\section{Algorithms due to Serre} \label{sec:serre}

The algorithms and theorems in this section are taken partially from
Serre's text on representation theory \cite{serre_2014}. In that text,
you will find a more mathematically rigorous treatment of this
material, including full proofs of correctness. Our focus is on
algorithmic details and performance, which Serre was not overly
concerned with.

We first present the basic algorithm as is presented in Serre's
text. Then, we describe optimisations that greatly reduce the running time
in certain cases.

\subsection{Basic algorithm} \label{sec:basic}

The algorithm proceeds in two steps. Given a representation $\rho : G
\to \GL(V)$, we first compute the canonical decomposition of $V$, then
we decompose each summand into irreducibles.

Let $W_1, \ldots, W_h$ be the complete list of irreducibles of $G$, $n_1,
\ldots, n_h$ their degrees and $\chi_1, \ldots, \chi_h$ their
characters.

We use a result due to Serre \cite{serre_2014}:
\vspace{1em}
\begin{theorem}
The projection $p_i$ of $V$ onto $V_i$ associated with the canonical
decomposition is given by:
$$p_i = \frac{n_i}{|G|} \sum_{t \in G} \overline{\chi_i(t)} \rho(t)$$
\end{theorem}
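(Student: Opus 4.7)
The plan is to define $p_i := \frac{n_i}{|G|} \sum_{t \in G} \overline{\chi_i(t)} \rho(t)$ and verify the two characterising properties of the projection onto $V_i$ along the other canonical components: $G$-equivariance, and acting as the identity on $V_i$ while vanishing on $V_j$ for $j \neq i$.

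First I would establish $G$-equivariance. Conjugating, $\rho(s) p_i \rho(s)^{-1} = \frac{n_i}{|G|} \sum_{t} \overline{\chi_i(t)} \rho(sts^{-1})$; reindexing by $u = sts^{-1}$ and using that $\chi_i$ is a class function so $\overline{\chi_i(t)} = \overline{\chi_i(u)}$, the sum is unchanged. Hence $\rho(s) p_i = p_i \rho(s)$ for every $s \in G$, so $p_i$ lies in the centraliser of $\rho$, and in particular preserves each canonical component $V_j$.

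Next I would pin down the scalar by which $p_i$ acts on each irreducible summand. Decompose $V = \bigoplus_k U_k$ into irreducibles; on any $U_k$ isomorphic to $W_j$, the restriction $p_i|_{U_k}$ is $G$-equivariant, so by Schur's lemma it is a scalar $\lambda_{ij}$ times the identity. Taking traces yields
\[
\lambda_{ij}\, n_j = \operatorname{tr}(p_i|_{U_k}) = \frac{n_i}{|G|} \sum_{t \in G} \overline{\chi_i(t)}\, \chi_j(t) = n_i \langle \chi_j, \chi_i \rangle.
\]
By the Schur orthogonality relations for irreducible characters, $\langle \chi_j, \chi_i \rangle = \delta_{ij}$, so $\lambda_{ij} = \delta_{ij}$. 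Summing over the summands $U_k$, we conclude $p_i$ acts as the identity on $V_i$ and as zero on $V_j$ for $j \neq i$, which is exactly the defining property of the canonical projection.

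The only non-trivial ingredient is the Schur orthogonality of characters; once that is invoked, everything reduces to the change-of-variables step for class-function invariance and the trace calculation above. I expect no genuine obstacle beyond presenting these standard pieces in order; the subtlety to watch is keeping track of complex conjugates so that the inner product $\langle \chi_j, \chi_i \rangle = \frac{1}{|G|}\sum_t \chi_j(t)\overline{\chi_i(t)}$ matches the coefficients appearing in $p_i$.
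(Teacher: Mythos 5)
Your proof is correct, and it is essentially the standard argument that the paper defers to Serre for (the paper itself states the result without proof): conjugation-invariance of the class function $\overline{\chi_i}$ gives $p_i \in C_\rho$, Schur's lemma makes the restriction to each irreducible summand a scalar (note the restriction lands in the same summand simply because $p_i$ is a linear combination of the $\rho(t)$, which preserve every subrepresentation), and the trace computation with orthonormality of irreducible characters pins the scalar to $\delta_{ij}$, so $p_i$ is the identity on $V_i$ and zero on the other canonical components. No gaps; this matches the cited source's route.
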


We can iterate over each irreducible and compute the image of the projection
to obtain our canonical decomposition $V = \bigoplus\limits_i V_i$.

The next step is to decompose each $V_i$ into a direct sum of
irreducible subrepresentations. The key result we use to compute this
decomposition is due to Serre \cite{serre_2014}:
\vspace{1em}
\begin{theorem} \label{thm:prop8}
  Let $n$ be the degree of the irreducible $W_i$, with $W_i$ given in matrix
  form by $r_{\alpha\beta}(s)$. Let $p_{\alpha\beta}$ denote the
  linear map $V \to V$ given by:

  $$p_{\alpha\beta} = \frac{n}{|G|} \sum_{t \in G} r_{\beta\alpha}(t^{-1})\rho(t)$$

  \begin{enumerate}[(a)]
    \item The map $p_{\alpha\alpha}$ is a projection; it is zero on
      the $V_j$ for $j \neq i$. Its image $V_{i, \alpha}$ is contained
      in $V_i$ and $V_i$ is the direct sum of the $V_{i, \alpha}$ for
      $1 \leq \alpha \leq n$. We have $p_i = \sum_\alpha
      p_{\alpha\alpha}$.

    \item The linear map $p_{\alpha\beta}$ is zero on the $V_j$ for $j
      \neq i$, as well as on the $V_{i, \gamma}$ for $\gamma \neq
      \beta$; it defines an isomorphism from $V_{i, \beta}$ onto
      $V_{i, \alpha}$.

    \item Let $x_1$ be an element $\neq 0$ of $V_{i, 1}$ and let
      $x_\alpha = p_{\alpha 1}(x_1) \in V_{i, \alpha}$. The $x_\alpha$
      are linearly independent and generate a vector subspace $W(x_1)$
      stable under $G$ and of dimension $n$. For each $s \in G$ we
      have:
      $$\rho(s)(x_\alpha) = \sum_\beta r_{\beta\alpha}(s)x_\beta$$
      (in particular, $W(x_1)$ is isomorphic to $W_i$).

    \item If $(x_1^{(1)}, \ldots, x_1^{(m)})$ is a basis of $V_{i,
      1}$, the representation $V_i$ is the direct sum of the
      subrepresentations $(W(x_1^{(1)}), \ldots, W(x_1^{(m)}))$.
  \end{enumerate}
\end{theorem}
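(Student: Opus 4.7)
The plan is to derive everything from Schur's orthogonality relations for matrix coefficients of irreducibles: for $W_i, W_j$ with matrix entries $r^{(i)}_{\alpha\beta}$, these state
$$\frac{1}{|G|} \sum_{t \in G} r^{(i)}_{\alpha\beta}(t^{-1})\, r^{(j)}_{\gamma\delta}(t) \;=\; \frac{1}{n_i}\,\delta_{ij}\,\delta_{\alpha\delta}\,\delta_{\beta\gamma}.$$
The first step I would carry out is to establish the master identity $p_{\alpha\beta}\circ p_{\gamma\delta} = \delta_{\beta\gamma}\, p_{\alpha\delta}$ (restricted to $V_i$), together with $p_{\alpha\beta}|_{V_j} = 0$ for $j\neq i$. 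Both drop out of a single calculation: expand the composition, substitute $u = st$ in the double sum, then use $r_{\beta\alpha}(tu^{-1}) = \sum_k r_{\beta k}(t)\, r_{k\alpha}(u^{-1})$ to factor out an inner sum to which the orthogonality relation applies. This master identity is the engine of the whole proof; everything else is a formal consequence.

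Parts (a) and (b) then follow mechanically. Setting $\gamma=\delta=\alpha$ in the master identity makes $p_{\alpha\alpha}$ idempotent, hence a projection, and the identities $p_{\alpha\alpha}\, p_{\beta\beta}=0$ for $\alpha\neq\beta$ force the images $V_{i,\alpha}$ into a direct sum. The equality $p_i = \sum_\alpha p_{\alpha\alpha}$ comes from $\chi_i(t) = \sum_\alpha r_{\alpha\alpha}(t)$ together with $\overline{\chi_i(t)} = \chi_i(t^{-1})$, applied to the defining formulas. For (b), vanishing on $V_{i,\gamma}$ with $\gamma\neq\beta$ is immediate from $p_{\alpha\beta}\, p_{\gamma\gamma}=0$, and the relation $p_{\beta\alpha}\, p_{\alpha\beta} = p_{\beta\beta}$ restricted to $V_{i,\beta}$ exhibits $p_{\beta\alpha}|_{V_{i,\alpha}}$ as a two-sided inverse to $p_{\alpha\beta}|_{V_{i,\beta}}$.

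For part (c), the key computation is $\rho(s)\circ p_{\alpha 1} = \sum_\beta r_{\beta\alpha}(s)\, p_{\beta 1}$: substitute the definition of $p_{\alpha 1}$, reindex via $u = st$, and expand $r_{1\alpha}(u^{-1}s) = \sum_\beta r_{1\beta}(u^{-1})\, r_{\beta\alpha}(s)$. Evaluating at $x_1$ gives the transformation law for the $x_\alpha$; linear independence is automatic because $x_\alpha \in V_{i,\alpha}$ and these components are in direct sum by (a); $G$-stability of $W(x_1)$ plus the transformation law identifies it with $W_i$. For (d), each $W(x_1^{(k)})$ is irreducible by the isomorphism from (c), and their sum has dimension at most $nm$, where $m = \dim V_{i,1}$; since (b) gives $\dim V_i = nm$, and the sum meets $V_{i,1}$ in the full span of the $x_1^{(k)}$, the inclusion $\bigoplus_k W(x_1^{(k)}) \subseteq V_i$ must be an equality. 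The main obstacle is bookkeeping: the indices $i,j,\alpha,\beta,\gamma,\delta$ proliferate in the orthogonality calculation, and one must keep the matrix coefficients of $W_i$ carefully distinct from those of the other irreducibles when restricting $p_{\alpha\beta}$ to different isotypic components.
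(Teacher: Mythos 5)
Your proof is correct: the master identity $p_{\alpha\beta}\circ p_{\gamma\delta}=\delta_{\beta\gamma}\,p_{\alpha\delta}$ together with the vanishing of $p_{\alpha\beta}$ on the isotypic components $V_j$, $j\neq i$, does follow from the Schur orthogonality relations for matrix coefficients exactly as you outline, and parts (a)--(d) are then the formal consequences you describe (with the small observations that $x_\alpha\neq 0$ via the isomorphism in (b), and that the spanning-plus-dimension count gives both equality and directness in (d)). The paper itself offers no proof of this theorem --- it is quoted from Serre, whose argument is the same orthogonality-relation computation --- so your route is essentially the one behind the cited source.
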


We will iterate over the canonical summands $V_i$, compute a basis for
$V_{i,1}$, $(x_1^{(1)}, \ldots, x_1^{(m)})$, and for each $j$, compute
the vector subspace $W(x_1^{(j)})$. Then $V_i = \bigoplus\limits_j W(x_1^{(j)})$
is the irreducible decomposition of this canonical summand.

The full algorithm is as follows:

\begin{algorithm}[h]
\begin{algorithmic}
\Function{decompose representation}{$\rho : G \to \GL(V)$}
\State $n \gets \text{degree}(\rho)$
\State $\Delta \gets \{\}$ \Comment{This is where we will build the decomposition}
\For{$\rho_i \in \text{irreducibles}(G)$}
  \State $\Delta_i \gets \{\}$ \Comment{Decomposition of $V_i$}
  \State $n_i \gets \text{degree}(\rho_i)$
  \State $\chi_i \gets \text{character}(\rho_i)$
  \State $p_i \gets \frac{n_i}{|G|} \sum_{t \in G} \overline{\chi_i(t)} \rho(t)$
  \State $V_i \gets p_i(V)$
  \For{$1 \leq \alpha, \beta \leq n_i$}
    \State $p_{\alpha\beta} \gets = \frac{n_i}{|G|} \sum_{t \in G} (\rho_i(t^{-1}))_{\beta\alpha}\rho(t)$
  \EndFor
  \State $V_{i,1} \gets p_{11}(V_i)$
  \For{$x_1^{(j)} \in \text{basis}(V_{i,1})$}
    \State $W(x_1^{(j)}) \gets \text{span}(\{p_{\alpha1}(x_1^{(j)}) : 1 \leq
      \alpha \leq n_i\})$
    \State $\Delta_i \gets \Delta_i \cup \{W(x_1^{(j)})\}$
  \EndFor
  \State $\Delta \gets \Delta \cup \Delta_i$
\EndFor
\State \Return $\Delta$
\EndFunction
\end{algorithmic}
\end{algorithm}

This basic version of the algorithm can be called in the GAP package
as the function {\tt IrreducibleDecomposition(rho : no\_optimisations)}.

\subsection{Optimised algorithm}

The most obvious optimisation is to prune the set of irreducibles to the
$W_i$ that actually appear in the decomposition before performing any
expensive per-irreducible computations. Let $\chi_\rho$ be the character of
$\rho$ and recall that the irreducible characters form an orthonormal
basis for the space of characters. We can restrict our attention to
the irreducibles $W_i$ such that $\langle \chi_\rho, \chi_i \rangle > 0$,
since these $W_i$ are the only ones that appear with nonzero
multiplicity in the decomposition of $\rho$.

This is a cheap calculation since we can reduce it to a summation over
the classes of $G$. Suppose the conjugacy classes of $G$ are given by
$\{t_1^G, \ldots, t_m^G\}$. Then:

\[
\langle \chi_\rho, \chi_i \rangle = \frac{1}{|G|} \sum_{g \in G}
\chi_\rho(g) \chi_i(g)^* = \frac{1}{|G|} \sum_j |t_j^G| \chi_\rho(t_j) \chi_i(t_j)^*
\]

This will reduce the amount of work if only a small number of irreducibles
actually appear in $\rho$.

\subsubsection{Computing the canonical decomposition} \label{sec:canonical} \label{sec:use_basis}

In one case, we can skip the canonical decomposition computation
altogether. If we find only one irreducible $W_i$ appears in the
decomposition of $V$, the canonical decomposition is just the whole
space $V$.

Otherwise, we must compute the projections $p_i : V \to V_i$. The main
optimisation for this step requires some extra information to be
provided by the user.

Suppose we are given a basis $B_1, \ldots, B_d$ for the centraliser
ring $C$ of $\rho$, which is orthonormal with respect to the inner
product $\langle A, B \rangle = \text{Trace}(AB^*)$, where $B^*$ is
the conjugate transpose of $B$. We can rewrite the expression for
$p_i$ as follows. Let $\{ t_1, \ldots, t_m \}$ be a set of
representatives for the conjugacy classes of $G$. Then:

\begin{align*}
p_i &= \frac{n_i}{|G|} \sum_{t \in G} \overline{\chi_i(t)} \rho(t) \\
&= \frac{n_i}{|G|} \sum_j \sum_{s \in t_j^G}
\overline{\chi_i(s)} \rho(s) \\
&= \frac{n_i}{|G|} \sum_j \overline{\chi_i(t_j)}
\sum_{s \in t_j^G}  \rho(s)
\end{align*}

A matrix $M$ is in the centraliser of $\rho$ exactly when conjugating
$M$ by any $\rho(g)$ leaves $M$ unchanged. Say $S = t^G$, the
conjugacy class of $t$ in $G$. Then $\sum_{s \in S} \rho(s)$ is an
element of $C$, since:

$$\rho(g) \left( \sum_{s \in S}\rho(s) \right) \rho(g)^{-1} = \sum_{s
  \in S}\rho(gsg^{-1}) = \sum_{s \in gSg^{-1}}\rho(s) = \sum_{s \in S}
\rho(s)$$

Now we can write $\sum_{s \in S} \rho(s)$ as a vector in the space
spanned by the $B_k$, using the inner product.

We also require that $B_k^*$ is in the centraliser.

\begin{equation} \label{eqn:classsumtrick}
\begin{aligned}
\left\langle \sum_{s \in S} \rho(s), B_k \right\rangle &= \sum_{s \in S} \langle
\rho(s), B_k \rangle \\
&= \sum_{s \in S} \text{Trace}(\rho(s)B_k^*) \\
&= \sum_{s \in S} \text{Trace}(\rho(g_s)\rho(t)\rho(g_s)^{-1}B_k^*)
& \text{($s = g_s t g_s^{-1}$ for some $g_s \in G$)} \\
&= \sum_{s \in S} \text{Trace}(\rho(g_s)\rho(t)B_k^*\rho(g_s)^{-1})
& \text{(we require $B_k^* \in C$)}\\
&= \sum_{s \in S} \text{Trace}(\rho(t)B_k^*) \\
&= |S| \text{Trace}(\rho(t)B_k^*)
\end{aligned}
\end{equation}

Let $\{t_1, \ldots, t_n\}$ be representatives of the conjugacy classes
of $G$. Now, to sum over the whole group, we need to compute:

\begin{equation} \label{eqn:groupclasssum}
\sum_{g \in G} \rho(g) = \sum_{i=1}^n \sum_{j=1}^d |t_i^G| \text{Trace}(\rho(t_i)B_j^*)
\end{equation}

The number of field operations required to compute the summation
(\ref{eqn:groupclasssum}) depends on both $d$, the dimension of the
centraliser, and $n$, the number of conjugacy classes of $G$, but does
not depend on $|G|$ at all.

This proof depends on the fact that $C$ is closed under complex
conjugation. That is, to use this trick, we require that $C$ is a
matrix $*$-algebra.

There are several cases where $C$ has this property. The most obvious
is if we have a $\rho : G \to M_n(\mathbb{C})$ that has already been
block-diagonalised according to its irreducible decomposition.

Let $\rho = \bigoplus\limits_{i=1}^n \bigoplus\limits_{j=1}^{m_i} \rho_i$ where the
$\rho_i$ are distinct and irreducible, and $m_i$ is the multiplicity
of $\rho_i$ in $\rho$. Recall that the centraliser of
$M_n(\mathbb{C})$ is the set of scalar matrices $\lambda I$ for
$\lambda \in \mathbb{C}$.

In our case, each block of our matrix $\rho(g)$ (corresponding to
canonical summand $V_i$) is a block-diagonal matrix
$\text{diag}(\rho_i(g), \ldots, \rho_i(g))$. If we now consider the
ring of coefficients in our matrix to be $M_{d_i}(\mathbb{C})$, where
$d_i$ is the degree of $\rho_i$, then this is a scalar matrix
$\rho_i(g) I_{m_i}$, so commutes with all matrices in $M_{m_i}(S)$
where $S$ is the subring of $M_{d_i}(\mathbb{C})$ consisting of scalar
matrices. We know that the $\rho_i$ do not commute with any more
matrices due to Schur's lemma, which tells us that the only
$G$-invariant linear endomorphisms of an irreducible are scalar.

To clarify, the centraliser of $\rho$ is spanned as a vector space by
matrices with $n$ scalar matrix blocks indexed by $i$, with block
$B_i$ (corresponding to $V_i$) having size $m_id_i \times
m_id_i$. Each of these blocks $B_i$ is divided into an $m_i \times
m_i$ grid of smaller blocks, each of size $d_i \times d_i$. An
orthonormal basis for $C$ is given by a set of $\sum_i m_i^2$
matrices, each with exactly one small block set to the identity matrix
and all other blocks set to zero.

For example, if we have a representation $\rho = \rho_1 \oplus
2\rho_2$, where $\deg \rho_1 = 3$ and $\deg \rho_2 = 2$, then a basis
for $C_\rho$ is:

\[
\left(
\begin{array}{c|cc}
  I_3 & 0 & 0 \\
  \hline
  0 & 0 & 0 \\
  0 & 0 & 0
\end{array}
\right),
\left(
\begin{array}{c|cc}
  0 & 0 & 0 \\
  \hline
  0 & I_2 & 0 \\
  0 & 0 & 0
\end{array}
\right),
\left(
\begin{array}{c|cc}
  0 & 0 & 0 \\
  \hline
  0 & 0 & I_2 \\
  0 & 0 & 0
\end{array}
\right),
\left(
\begin{array}{c|cc}
  0 & 0 & 0 \\
  \hline
  0 & 0 & 0 \\
  0 & I_2 & 0
\end{array}
\right),
\left(
\begin{array}{c|cc}
  0 & 0 & 0 \\
  \hline
  0 & 0 & 0 \\
  0 & 0 & I_2
\end{array}
\right)
\]

It is now clear that given a matrix in the span of these matrices, its
conjugate transpose is also in the span.

In the case that we are not given a basis for $C$, we must calculate
one. In general, this may be difficult. In the specific case where
$\rho$ is an isomorphism to a permutation group, i.e. the permutation
representation of a faithful action of $G$ on some finite set $X$, we
have a basis for $C$ available to use already: the set of orbital
matrices (see Section \ref{sec:background} for their definition).

Let the set of orbitals be given by $\{ \Delta_i : 1 \leq i \leq r
\}$, with $\Delta_i$ having matrix $A_i$. The $A_i$ form an
orthonormal basis for $C$, and can be used with the method of
computing the sum described earlier (see equation
(\ref{eqn:classsumtrick})) since they form a matrix $\ast$-algebra.

Notice that $\{ A_i : 1 \leq i \leq r \}$ span a $\ast$-algebra. This
is because each orbital $\Delta_i$ has a paired orbital $\Delta_{i^*}
= \{ (y,x) : (x,y) \in \Delta_i \}$. It is clear that the associated
adjacency matrices are transposes of each other. This means that the
algebra generated is closed under $\ast$.

A permutation representation is a specific case of a more general
class of representations that work with this trick: \emph{unitary}
representations. A representation $\rho$ is \emph{unitary} if
$\rho(g)^* = \rho(g^{-1})$. Then if $A \in C_\rho$, $A^* \in C_\rho$
also:
\[
\rho(g)A^* = (\rho(g)A^*)^{**} = (A\rho(g)^*)^* = (A\rho(g^{-1}))^* =
(\rho(g^{-1})A)^* = A^* \rho(g^{-1})^* = A^* \rho(g)
\]

When implementing the canonical decomposition, we only use this
optimisation when $\rho$ is unitary. Although the trick also works if
$\rho$ is already block diagonalised, it is a waste of time to check
for this: representations are almost never already optimally block
diagonal unless they were constructed that way intentionally.

Note that for groups with small numbers of generators that have easily
calculated inverses (e.g. $S_n$), checking whether a representation is
unitary is fairly cheap since we only need to check the generators
have unitary images. More precisely, for a single generator, the
calculation is quadratic in the degree of the representation, since we
must transpose the matrix $\rho(g)$, conjugate each cell and check
equality with $\rho(g^{-1})$. This cost is small compared to the
almost cubic running time of a matrix multiplication.

\subsubsection{Unsuitability of the Gram-Schmidt process} \label{sec:gram}

In the previous section, we only attempt to find bases for the
centraliser ring that are already orthogonal. An alternative approach
would be to find a basis that is not necessarily orthogonal and simply
apply the Gram-Schmidt orthonormalisation process to obtain an
orthonormal basis.

\begin{figure}[h] \label{fig:gram}
  \caption{An implementation of the Gram-Schmidt process in GAP}
  \begin{lstlisting}[basicstyle=\ttfamily]
OrthonormalBasis@ := function(v, prod)
    local proj, N, u, e, k;

    proj := function(u, v)
        return (prod(u, v) / prod(u, u)) * u;
    end;

    N := Length(v);

    u := [1..N];
    e := [1..N];

    for k in [1..N] do
        u[k] := v[k] - Sum([1..k-1], j -> proj(u[j], v[k]));
        e[k] := (1/Sqrt(prod(u[k], u[k]))) * u[k];
    od;

    return e;
end;
  \end{lstlisting}
\end{figure}

This algorithm is simple and does not have an unmanageable complexity
in terms of the number of arithmetic operations - only $2nk^2$
operations are required, where $k$ is the number of vectors (in our
case, the dimension of the centraliser) and $n$ is the dimension of
the vectors (in our case, the square of the degree of the
representation). The issue is that the arithmetic operations
themselves are do not happen in constant time or space if we are using
exact numbers, expressed in terms of cyclotomic numbers.

In GAP, a cyclotomic number $z \in \mathbb{C}$, is represented
internally as a list of coefficients $a_i \in \mathbb{Q}$ such that
$\sum_{i=0}^n a_i \zeta_n^i = z$ for some $n$. This means that
cyclotomic numbers can cause a blowup in the amount of space required
to store the values of vectors and consequently, a blowup in the
amount of time required to perform arithmetic operations.

For example, suppose we are given the vector $v = (19, 10)$, $\Vert v
\Vert_2^2 = 461$. If we would like to normalise $v$, we have to
multiply by $\frac1{\sqrt{461}}$, but the smallest cyclotomic field of
which $\frac1{\sqrt{461}}$ is a member is $\mathbb{Q}(\zeta_{461})$,
so the scalars required to represent $\frac{v}{\sqrt{461}}$ will be
internally represented as lists of 461 coefficients. Arithmetic
operations on the normalised vector will thus be much slower than on
the original vector.

The Gram-Schmidt orthogonalisation algorithm does not take this into
consideration and is unusable in practice when dealing with exact
complex numbers.

In general, we try to avoid increasing the degree of the cyclotomic
field we are required to work in. This means we allow the usual field
operations, but not radicals, which is the issue our example
illustrates.

\subsubsection{Computing the irreducible
  decomposition} \label{sec:irr_decomp} \label{sec:groupsum}

In this section, we rephrase Serre's formula for the projections
$p_{\alpha\beta}$ using tensor products, allowing us to optimise the
sums using several observations about summing a representation.

Let $\rho_i$ be the irreducible whose canonical summand we are decomposing,
let $r$ give $\rho_i$ in matrix form, with $n = \deg \rho_i$, then:
\[
p_{\alpha\beta} = \frac{n}{|G|} \sum_{t \in G} r_{\beta\alpha}(t^{-1})\rho(t)
\]

Serre's method then proceeds by computing various images of vectors
under these maps to construct bases for the irreducible subspaces of
$V$ (see Section \ref{sec:basic}). The majority of computation time in
the basic method is spent computing these $p_{\alpha\beta}$, so this
is where we will focus our optimisation efforts.

We can rewrite Serre's formula as:
\[
p_{\alpha\beta} = \frac{n}{|G|} \sum_{t \in G}
(\rho_i^*(t))_{\alpha\beta}\rho(t)
\]
where $^*$ denotes the dual representation (see Section
\ref{sec:reptheory} for its definition).

We can regard $p_{\alpha\beta}$ as an $n \times n$ block of a matrix
$p$ and notice that $p$ is exactly in the form of a tensor product of
$\rho_i^*$ and $\rho$. Define $\tau = \frac{n}{|G|}(\rho_i^* \otimes
\rho)$. Then $\tau(t)_{\alpha\beta} =
\frac{n}{|G|}((\rho_i^*(t))_{\alpha\beta} \rho(t)) =
\frac{n}{|G|}((\rho_i(t^{-1})_{\beta\alpha} \rho(t))$.

So in fact:
\begin{equation} \label{eqn:irred_prod}
p_{\alpha\beta} = \left(\sum_{t \in G} \tau(t)\right)_{\alpha\beta}
\end{equation}
where we take the $n \times n$ block in the $(\alpha, \beta)$
position, indexing over blocks.

It is not possible to use the same trick from Section \ref{sec:use_basis} to
speed up the computation of (\ref{eqn:irred_prod}). While we may have
bases for $C_{\rho_i^*}$ (which is spanned by an identity matrix) and
$C_\rho$ (provided by the user), this does not give us a way to
compute the basis for $C_{\rho_i^* \otimes \rho}$.

In general, it is \emph{not} the case that a basis for $C_f$ and $C_g$
will give us a basis for $C_{f \otimes g}$. An easy example is when $G
= S_3$, which has two degree one irreducibles: $\rho_{triv}$ and
$\rho_{sign}$, and a single degree two irreducible, $\rho$, the standard
representation.

A basis for $C_\rho$ is given by $I_2$ (see the argument in Section
\ref{sec:use_basis} for why). However, $\rho \otimes \rho =
\rho_{triv} \oplus \rho_{sign} \oplus \rho$, so $C_{\rho \otimes
  \rho}$ has dimension 3. We cannot immediately derive the basis for
$C_{\rho \otimes \rho}$ from the basis for $C_\rho$, since this is the
same thing as deriving the block structure of $\rho \otimes \rho$,
which might end up being significantly more complicated than that of
$\rho$. In this extreme case, we end up with a direct sum of
\emph{every} irreducible by tensoring a single irreducible with
itself.

We can, however, take advantage of the fact that the summation
(\ref{eqn:irred_prod}) has summands given by a group homomorphism.

Suppose we have a finite group $G$ and a monoid homomorphism $f : G
\to (R, \cdot)$, where $R$ is a ring with multiplicative monoid $(R,
\cdot)$.

We want to compute:

\begin{equation} \label{eqn:groupsum}
\sum_{g \in G} f(g)
\end{equation}

Given a subgroup $H \leq G$, we can compute the right coset
representatives of $H$: $r_1 = 1, r_2, \ldots, r_{|G:H|}$.

We can now rewrite the summation as:

\begin{equation} \label{eqn:groupsumtrick}
\sum_i \left(\sum_{h \in H} f(h)\right) f(r_i)
\end{equation}

This requires only $|H| + |G:H|$ image calculations, $|H| + |G:H|$
additions in $R$, and $|G:H|$ multiplications in $R$. If $H$ is either
trivial or $G$, this sum is the same as (\ref{eqn:groupsum}). To get
an improvement over naively summing, we want to choose a subgroup $H$
with ``medium-sized'' index, meaning that $|H| + |G:H| \leq |G|$.

Notice that we can now recurse and apply the same method to compute
the sum over $H$. If we were only choosing one $H$ to speed up the
sum, we would want this index to be ``medium-sized''. However, since
we can (by recursing) choose an arbitrarily long chain of subgroups
and we want to sum as few elements as possible, we want to split the
group as many times as possible. So we need to find a chain of
subgroups $G_1 \leq \ldots \leq G_n = G$ such that $|G_{i+1} : G_i|$
is minimal (and not 1) at each stage.

If $G = S_n$, it is easy to find a chain with fairly small indices.
Consider $G$ to act on $\{1, \ldots, n\}$ and use the chain $\{e\}
\leq S_2 \leq \ldots \leq S_n = G$, where we embed $S_i
\hookrightarrow S_n$ by considering $S_i$ to be the pointwise
stabiliser of $\{i+1, \ldots, n\}$. This gives us an improvement since
$|S_{i+1} : S_i| = i+1$, so there is a significant saving compared to
summing over the group directly. This idea generalises.

Let $G$ be a group acting on a set $\Omega$. A \emph{base} for $G$ is
a sequence $B = (b_1, \ldots, b_m)$ of points in $\Omega$ such that
the only $g \in G$ stabilising all $b_i$ is the identity.

The \emph{basic stabiliser chain} for $B$ is a chain of the form:

\[
\{1\} = G_m \leq \ldots \leq G_0 = G
\]

where $G_i$ is the group of the elements of $G$ stabilising the set
$\{b_1, \ldots, b_i\}$.

A \emph{strong generating set} for $G$ is a subset $S \subseteq G$
such that $S$ contains generators for all $G_i$.

GAP already has an implementation of the Schrier-Sims algorithm
\cite{sims_1970} \cite{knuth_1992} to calculate bases and strong
generating sets (BSGS), so we need not focus on the algorithmic
details of finding a BSGS.

Suppose we have a basic stabiliser chain for $G$, $G_m \leq \ldots
\leq G_0$. First, we compute the sum for $G_m$, which is
trivial. Then, we compute the sum for $G_{m-1}$ using the method
described earlier. We continue in this way until we reach $G$.

Although we are required to have a group with a natural action (i.e. a
permutation group) to use the BSGS algorithm to find a chain, the
following group summation algorithm is in no way specific to
permutation groups. Any chain of subgroups in any type of group
(permutation, matrix, finitely presented, and so on) will work: we
just do not describe methods of finding chains for these other types
of group.

\begin{algorithm}[h]
\begin{algorithmic}
\Function{sum group}{$f : G \to R$}
  \State let $\{1_G\} = G_m \leq \ldots \leq G_0 = G$ be a basic stabiliser chain for $G$
  \State $\sigma \gets f(1_G)$
  \For{$i \in \{m-1, \ldots, 0\}$}
    \State let $r_j$ be a transversal of representatives of $G_i \backslash G$
    \State $\sigma \gets \sum_j \sigma f(r_j)$
  \EndFor
  \State \Return $\sigma$
\EndFunction
\end{algorithmic}
\end{algorithm}

In general, the complexity of this algorithm depends greatly on the
basic stabiliser chain, which depends on the structure of the
group. To illustrate the speedup, we can consider the case when $G =
S_n$ and use the chain $\{e\} \leq S_2 \leq \ldots \leq S_n = G$. We
will measure the complexity in terms of ring operations, meaning
additions and subtractions in $R$.

Naively summing over the group requires $\O(n!)$ additions in $R$.

If we ignore the cost of computing coset representatives (which is
reasonable, since the cosets of $S_{m-1}$ in $S_m$ are known), to move
from the sum over $S_{m-1}$ to the sum over $S_m$, we compute:
\[
\sum_{i=1}^m \left(\sum_{h \in S_{m-1}} f(h)\right) f(r_i)
\]
where the $r_i$ are the $m$ coset representatives of $S_{m-1}$ in
$S_m$. Since the inner sum is already known, this takes $m$
multiplications and $m-1$ additions in $R$.

We do this for $m=2,3,\ldots,n$, so the total number of additions is
$\sum_{m=2}^n m-1$ which is $\O(n^2)$, and the number of
multiplications is similarly $\O(n^2)$. Even considering that $R$ is
usually a matrix ring, where multiplication is almost cubic in the
number of rows and columns, this is still asymptotically much better
than naively summing. In practice, we can sum $S_{10}$ in under a
second with the BSGS method, while it takes several minutes with the
naive method.

We can use this algorithm to compute the projections $p_{\alpha\beta}$
in Section \ref{sec:irr_decomp}.

Note that we cannot directly use this algorithm to compute the
projections to the canonical summands, since the formula $\sum_{t \in
  G} \overline{\chi_i(t)} \rho(t)$ does not have the property that the
summands are given by a group homomorphism.

However, from (\ref{thm:prop8}), we have that $p_i = \sum_{\alpha =
  1}^{n_i} p_{\alpha\alpha}$, so we can proceed by calculating the
$p_{\alpha\beta}$ first, then using these to calculate the $p_i$
projections onto the canonical summand and the decomposition of $V_i$.

\subsubsection{Summary}

To summarise, we apply the following optimisations:

\begin{enumerate}
  \item Determine which irreducibles we can ignore using the character inner
    product.
  \item \begin{enumerate}
  \item If the matrix of blocks $(p_{\alpha\beta})$ fits into memory,
    compute it using the group sum trick (\ref{eqn:groupsumtrick}),
    then use (\ref{thm:prop8}) to compute the projections $p_i$.
  \item If we are given an orthonormal basis for the centraliser and
    $\rho$ is unitary, compute the $p_i$ using the class sum trick
    (\ref{eqn:classsumtrick}).
  \item If $\rho$ is a permutation representation, compute an
    orthonormal basis for $C_\rho$ using orbitals, then go back to the
    previous case to compute $p_i$.
  \end{enumerate}
  \item Use the group sum trick to compute $(p_{\alpha\beta})$ if not
    already done and use these to break $V_i$ into irreducibles.
\end{enumerate}

The effect of these optimisations can be observed in the benchmarks in
Section \ref{sec:performance}.

The optimised version of the algorithm (which attempts to apply each
of the optimisations above) is the default when the functions {\tt
  IrreducibleDecomposition} and {\tt CanonicalDecomposition} are called.

\section{Our algorithm} \label{sec:mymethod}

We are trying to find the decomposition of a representation $\rho : G
\to M_n(\mathbb{C})$ into irreducibles, but given the irreducible
characters, we can immediately construct a representation $\tau : G
\to M_n(\mathbb{C})$ that is isomorphic to $\rho$ \emph{and} is block
diagonalised according to the irreducible decomposition.

In this section, we present an algorithm to find this $\tau$ and use
it to compute the irreducible decomposition of $V$ in the original
basis.

The GAP functions described in this section must be called with the
option {\tt decomp\_method := "alternate"} to use the algorithms
described here, the default methods are those described in Section
\ref{sec:serre}. For example, to call {\tt IrreducibleDecomposition}
in such a way as to use these methods, a user would call {\tt
  IrreducibleDecomposition(rho : decomp\_method := "alternate")}.

\subsection{Finding the block diagonalisation}

Two representations of a finite $G$ over $\mathbb{C}$ are isomorphic
if and only if they have the same character. Let the list of
irreducible characters of $G$ by given by $\chi_1, \ldots,
\chi_N$. These form an orthonormal basis for the $\mathbb{C}$-vector
space of characters, with respect to the inner product $\langle
\chi_i, \chi_j \rangle = \frac{1}{|G|} \sum_{g \in G} \chi_i(g)
\overline{\chi_j(g)}$.

We can thus determine the irreducible decomposition of $\rho$ (up to
isomorphism as representations) by calculating the multiplicities
$m_i$ in the expression $\chi_\rho = \sum_{i=1}^N\;m_i\,\chi_i$:
$\langle \chi_\rho, \chi_i \rangle = m_i$.

This means $\tau$, the block diagonalised representation isomorphic to
$\rho$, is given by $\tau = \bigoplus\limits_{i = 1}^N \bigoplus\limits_{j = 1}^{m_i}
\rho_i$, where $\rho_i$ is the irreducible corresponding to the character
$\chi_i$.

The GAP function {\tt BlockDiagonalRepresentation(rho)} returns this
$\tau$, given $\rho$.

\subsection{Finding the intertwining operator} \label{sec:inter}

We have constructed a block diagonal $\tau$, but we would now like to
know what the irreducible $G$-invariant spaces are, in our original
basis.

In the basis $\tau$ is written in, the irreducibles are spanned by certain
subsets of $\{e_1, \ldots, e_n\}$. To translate these vectors into the
old basis, we need to calculate the linear map $A : \mathbb{C}^n \to
\mathbb{C}^n$ with the property that:

\begin{equation} \label{eqn:iso}
A^{-1} \tau(g) A = \rho(g)\;\mbox{for all }g \in G
\end{equation}

$A$ is an \emph{intertwining operator} or isomorphism between the
representations.

We can find the intertwining operator by observing that the action of
$G$ on matrices given by:
\[
g \mapsto (A \mapsto \tau(g)A\rho(g^{-1}))
\]

is in fact a linear action, so is a representation of $G$, call it
$\alpha$.

$\alpha$ is a map from $G$ into the $n^2$ dimensional (where $n$ is
the degree of $\rho$ and $\tau$) $\mathbb{C}$-vector space
$M_n(\mathbb{C})$. Using methods from Section \ref{sec:serre}, we can
compute $\alpha$ and find the canonical summand $V_{triv}$
corresponding to the trivial representation $g \mapsto
\left(1\right)$.

If $A \in V_{triv}$, then since $G$ acts as the identity on this
subspace, $\alpha(g)A = A$. This means for all $g \in G$, $A =
\tau(g)A\rho(g^{-1})$, which is exactly the property required in
(\ref{eqn:iso}) for $A$ to be the intertwining operator.

The running time of this method depends on the running time of finding the
trivial canonical summand of a representation of $G$, which is done
using Serre's formula for the projection $p : V \to V_{triv}$:

\begin{equation} \label{eqn:trivproj}
  p = \sum_{g \in G} \alpha(g)
\end{equation}

Notice that this summation has summands given by a homomorphism, so we
can use the method of building the sum from a BSGS, as in Section
\ref{sec:groupsum}.

We then pick a random point $B$ in the domain of $p$, calculate the
image point $pB$, and rely on the fact that $pB$ will almost always be
invertible.

To be precise, the map $B \mapsto \det(pB)$ is polynomial in the
entries of $B$. We know that it is not identically zero since we know
an isomorphism between $\rho$ and $\tau$ exists, by
construction. Nonzero polynomials are zero on hypersurfaces with
dimension strictly less than the dimension of the ambient space. In
particular, this means the set of $B$ with $pB$ singular has measure
zero, since hypersurfaces have measure zero. In the pure mathematical
sense, this means that picking $B$ from a uniform distribution on a
ball with nonzero radius has probability zero of being singular. In
practice, with a computer, the probability is low enough that a single
try will almost always\footnote{Here, I do not mean ``almost always''
  in the precise mathematical sense, I mean that the probability is
  fairly close to 1} be enough to find an invertible $pB$.

We can directly prove, without relying on Serre's proof, that $A := pB$
satisfies the intertwining operator property. Let $g_0 \in G$. Then:

\begin{equation} \label{eqn:isoproof}
\begin{aligned}
 \tau(g_0) A &= \tau(g_0) \sum_{g \in G} \tau(g) B \rho(g^{-1}) \\
  &= \sum_{g \in G} \tau(g_0g) B \rho(g^{-1}) \\
  &= \sum_{g \in G} \tau(g) B \rho(g^{-1}g_0) & \mbox{(by
    relabelling)} \\
  &= \sum_{g \in G} \tau(g) B \rho(g^{-1}) \rho(g_0) \\
  &= A \rho(g_0)
\end{aligned}
\end{equation}

While we do know what $\alpha$ is as a linear map, we need to know
what it is as a matrix in order to actually compute the sum.
\vspace{1em}
\begin{proposition}
$\alpha = \tau \otimes \rho^*$, where $\rho^*(g) = \rho(g^{-1})^T$
  denotes the dual representation (defined in Section
  \ref{sec:reptheory}).

  (We consider $\alpha(g)$ to act on $n \times n$ matrices by reading
  off each row in sequence, one after the other, to obtain a vector in
  $\mathbb{C}^{n^2}$)
\end{proposition}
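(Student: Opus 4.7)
The plan is to reduce the claim to the standard vectorization identity for the matrix product $XAY$, adapted to the row-major convention the paper uses. Recall that if we define $\mathrm{rv} : M_n(\mathbb{C}) \to \mathbb{C}^{n^2}$ by reading off rows, i.e.\ $\mathrm{rv}(A)_{(i-1)n+j} = A_{ij}$, then the identification of $M_n(\mathbb{C})$ with $\mathbb{C}^{n^2}$ used in the statement is exactly $A \mapsto \mathrm{rv}(A)$.

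First I would prove the following vectorization lemma: for all $X, A, Y \in M_n(\mathbb{C})$,
\[
\mathrm{rv}(XAY) \;=\; (X \otimes Y^T)\, \mathrm{rv}(A).
\]
This is a one-line index computation: the $(i,j)$ entry of $XAY$ is $\sum_{k,l} X_{ik} A_{kl} Y_{lj}$, while the $((i-1)n+j)$-th entry of $(X \otimes Y^T)\mathrm{rv}(A)$ is $\sum_{k,l} X_{ik}(Y^T)_{jl} A_{kl} = \sum_{k,l} X_{ik} Y_{lj} A_{kl}$, and these agree. (The only thing to be careful about is that the row-major convention forces $Y^T$ rather than $Y$ in the second factor; the usual textbook identity uses column-major vectorization and reads $\mathrm{vec}(XAY) = (Y^T \otimes X)\mathrm{vec}(A)$, and I would double-check the transpose placement matches the row-major choice.)

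Applying the lemma with $X = \tau(g)$ and $Y = \rho(g^{-1})$ gives
\[
\mathrm{rv}\bigl(\alpha(g)(A)\bigr) \;=\; \mathrm{rv}\bigl(\tau(g)\, A\, \rho(g^{-1})\bigr) \;=\; \bigl(\tau(g) \otimes \rho(g^{-1})^T\bigr)\, \mathrm{rv}(A).
\]
By the definition of the dual representation given in Section \ref{sec:reptheory}, $\rho(g^{-1})^T = \rho^*(g)$, so the matrix representing $\alpha(g)$ in the row-major basis is $\tau(g) \otimes \rho^*(g)$. Finally, since the tensor product of two representations is defined pointwise by $(\tau \otimes \rho^*)(g) = \tau(g) \otimes \rho^*(g)$, we conclude $\alpha = \tau \otimes \rho^*$ as claimed.

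The only genuine obstacle is bookkeeping: matching the row-major vs.\ column-major convention and making sure the transpose lands on the correct factor so that $\rho^*$, rather than $\rho$, appears. Once the vectorization lemma is stated with the right transpose, the rest is immediate from the definitions of $\alpha$, $\rho^*$, and the tensor product of representations.
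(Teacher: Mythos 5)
Your proposal is correct and takes essentially the same route as the paper: the paper verifies the identity on the basis matrices $E_{ij} = e_i e_j^T = e_i \otimes e_j$, computing $(\tau \otimes \rho^*)(g)E_{ij} = \tau(g)e_i(\rho^*(g)e_j)^T = \tau(g)E_{ij}\rho(g^{-1}) = \alpha(g)E_{ij}$, which is exactly your row-major vectorization lemma $\mathrm{rv}(XAY) = (X \otimes Y^T)\mathrm{rv}(A)$ checked entrywise. Your care about the transpose landing on the second factor (so that $\rho^*$ rather than $\rho$ appears) is precisely the point the paper's basis computation handles via $(\rho^*(g)e_j)^T = e_j^T\rho(g^{-1})$.
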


\begin{proof}
Recall that $e_i \otimes e_j = e_i e_j^T = E_{ij}$, and that the
$E_{ij}$ form a basis for the space of matrices
$M_n(\mathbb{C})$. Then:
\begin{align*}
(\tau \otimes \rho^*)(g)E_{ij}
&= \tau(g)e_i \,\otimes\, \rho^*(g)e_j \\
&= \tau(g)e_i (\rho^*(g)e_j)^T \\
&= \tau(g) e_i e_j^T \rho(g^{-1}) \\
&= \tau(g) E_{ij} \rho(g^{-1}) \\
&= \alpha(g)E_{ij} \\
\end{align*}
\end{proof}

If we directly calculate the images $\alpha(g)$ using the Kronecker
product of two $n \times n$ matrices, this will incur $\mbox{O}(n^4)$
extra space. This is a problem if the degree of the representation is
large, since even a single matrix $\alpha(g)$ will not fit in memory.

\subsection{Reducing the degree} \label{sec:myparallel}

One method of reducing the degrees we need to compute with is to
proceed in two stages. First, split $V$ into canonical summands using
the method due to Serre described in Section
\ref{sec:canonical}. Second, apply the method from Section \ref{sec:inter} on
the (hopefully much smaller degree) canonical summands. In the worst
case, this will provide no benefit, since it is possible that there is
a single canonical summand - the whole space. Generally,
representations do consist of more than one isomorphism class of
irreducible, so this optimisation is worthwhile, especially considering that
the calculation for each summand can now happen in parallel.

Given a linear map $A : V \to V$, we can restrict it to a subspace $W$
with basis $\{ w_j \}_{j=1}^{\dim W}$ by computing the matrix
$(A|_W)_{ij} = A(w_j)_i$. We can restrict a representation $\rho$ by
computing $\rho(g)|_W$ for each generator $g$ of $G$.

A key property used in Section \ref{sec:use_basis} to compute the centraliser
basis is that once we have block diagonalised, blocks corresponding to
isomorphic irreducibles will have the same matrix coefficients. Restricting
to a subspace preserves this property, as long as we are careful to
use the correct bases.

\subsection{Memory-constrained methods} \label{sec:lowmem}

In some cases, the Kronecker products will still be too large to fit
in memory. In this case, we must find a way to represent $\alpha$
without using Kronecker products, then find an element of $V_{triv}$.

The key space optimisation is to represent a tensor product of
matrices as a pair of matrices, without actually calculating the
Kronecker product. This takes advantage of the (multiplicative) monoid
homomorphism $\phi : M_n(\mathbb{C}) \times M_n(\mathbb{C}) \to
M_n(\mathbb{C}) \otimes M_n(\mathbb{C})$ given by $\phi(A, B) = A
\otimes B$. This is a homomorphism since $(A \otimes B)(C \otimes D) =
(AC \otimes BD)$.

Given $g \in G$, we can represent $\alpha(g)$ as $(\tau(g),
\rho^*(g))$. The action of $\alpha$ on $n \times n$ matrices is then
given by: $(\tau(g), \rho^*(g))E_{ij} = \tau(g)e_i \otimes
\rho^*(g)e_j = \alpha(g)E_{ij}$ (extending to all matrices using
linearity). Using the pair representation, we are never required to
explicitly compute and store the matrix representing $\alpha(g)$, thus
getting rid of the problematic $\mbox{O}(n^4)$ space usage.

The pair representation does not allow us to sum tensor products, so
we cannot use the same method as before. Let $A \in M_n(\mathbb{C})$
and let $G \cdot A$ be its orbit under the action of $G$ given
by $\alpha$. If $v = \sum_{x \in G \cdot A} x$, then $v \in
V_{triv}$, since the action of $G$ fixes the set $G \cdot A$ and
is linear, so fixes their sum. Define the map $f : M_n(\mathbb{C}) \to
M_n(\mathbb{C})$ by:

\[
f(A) = \sum_{x \in G \cdot A} x
\]

then by a similar argument as for (\ref{eqn:trivproj}), this map
almost always gives, in a precise sense, an invertible matrix
$f(A)$. This matrix will satisfy the intertwining operator property.

The algorithm used to compute the orbit sum $f(A)$, where the action
of $G$ is given by $\rho \otimes \tau$ is as follows:

\begin{algorithm}[H]
\begin{algorithmic}
\Function{orbit sum}{$A$, $\rho \otimes \tau$}
  \State let $\{g_1, \ldots, g_n\}$ be a generating set for $G$
  \State $\sigma \gets 0$
  \State $\Delta \gets [A]$ \Comment{singleton list}
  \State $i \gets 1$
  \While{$i \neq |\Delta|$}
    \State $v \gets \Delta[i]$
    \State $\sigma \gets \sigma + v$
    \For{$1 \leq i \leq n$}
      \If{$\rho(g_i) \otimes \tau(g_i) \cdot v \notin \Delta$}
        \State append $\rho(g_i) \otimes \tau(g_i) \cdot v$ to $\Delta$
      \EndIf
    \EndFor
    \State $i \gets i + 1$
  \EndWhile
  \State \Return $\sigma$
\EndFunction
\end{algorithmic}
\end{algorithm}

A downside to this method is that it is possible that the list used to
hold orbit elements could need to grow to a large size, possibly
$|G|$. If this orbit cannot fit into memory, this orbit summing method
will not work, and we must resort to naively summing over $G$ to
calculate an image of the projection to $V_{triv}$:

\[
\sum_{g \in G} \alpha(g)A
\]

for some randomly chosen $A$. This requires a number of matrix
additions and multiplications linear in the size of $G$ but only
requires us to store a small constant number of $n \times n$ matrices
(an accumulator and each summand, one by one), so is not very
memory-intensive.

\section{Testing and benchmarking} \label{sec:performance}

The benchmarks and tests in this section were run on a laptop with an
Intel Core i7-4720HQ CPU running at 2.60GHz, with 12 GB of memory. The
code was run on Debian GNU/Linux, with the 4.15.0 kernel and GAP
version 4.10.1.

All times reported are wall-clock times, measured using GAP's {\tt
  NanosecondsSinceEpoch} function. Each benchmark was run 3 times and
the result averaged to produce each data point.

\subsection{Generating test cases} \label{sec:test}

When converting our pseudocode and theorems into runnable code, it is
helpful to have a wide range of test cases so that we can be fairly
sure our program is correct.

One approach is to think of a wide range of examples and write tests
that check the correctness of the algorithm on those specific
cases. While this is better than having no tests, it is unlikely that
we will be able to come up with a complete set of examples.

The approach we took was to randomly generate test cases and check
properties. This is known as \emph{property-based testing} and is
inspired largely by QuickCheck\footnote{\url{
 http://hackage.haskell.org/package/QuickCheck}}, a Haskell library
for property-based testing.

Our GAP package has a function, {\tt RandomRepresentation} that
generates a representation by the following procedure:

\begin{itemize}
\item Randomly select a group $G$ from GAP's {\tt SmallGroup} library.
\item Compute the list of irreducible representations of $G$ using
  {\tt IrreducibleRepresentationsDixon}
\item Choose a small number of these irreducibles randomly and
  directly sum them using our function {\tt
    DirectSumOfRepresentations}.
\item Conjugate by a random invertible matrix, generated by {\tt
  RandomInvertibleMat}.
\end{itemize}

While computing this random representation, since we constructed it
from irreducibles, we know the block structure and centraliser. This
knowledge is what we use to check the correctness of the decomposition
of the representation after the computation.

For each function in the package, our test suite generates several
test cases and checks the correctness of the result.

\subsection{Performance comparisons}

There are two types of benchmarks that were conducted: benchmarks on a
large set of random small groups (generated using {\tt
  RandomRepresentation}), and benchmarks on some known examples.

Since there are too many combinations of options and flags to test
them all, we focus on several cases that demonstrate the effect of our
optimisations.

First, we will benchmark the method described in Section \ref{sec:mymethod},
comparing the different ways to find intertwining operators
(\ref{sec:inter}). Recall that to find an isomorphism $\rho \cong
\tau$, we need to find a vector in the fixed space $V_{triv}$ of
$\alpha = \rho \otimes \tau^*$, there are several ways to do this:

\begin{enumerate}
  \item Naively summing over $G$ (referred to as the ``naive'' method)
  \item Only summing an orbit of $\alpha$ (referred to as the ``orbit
    sum'' method)
  \item Computing the projection to $V_{triv}$ using the fast group
    sum trick (\ref{sec:groupsum}) and the Kronecker product of $\rho$
    and $\tau^*$ (referred to as the ``Kronecker'' method).
\end{enumerate}

Consider the representation of $S_n$ defined by $\rho_n : S_n \to
\mathbb{C}^n$, $\rho(\sigma)_{ij} = 1$ if $j = \sigma(i)$ and 0
otherwise. This is clearly a faithful and unitary representation. It
splits into a direct sum of two irreducible representations: the
trivial summand spanned by the all-one vector, and the orthogonal
complement (by the $G$-invariant inner product). The nontrivial irreducible
is known as the standard representation of $S_n$.

\begin{figure}[H]
  \input{graphs/mymethod_symmetric_degree.pgf}
\end{figure}

It is immediately apparent that the orbit sum method is the
worst. This is because a random vector will likely have $n$ distinct
entries, which means its orbit under $S_n$ will have $n!$ elements: we
get every permutation of the entries. Thus summing over the orbit is
not an optimisation in this case: our running time is still linear in
$|G|$.

The next worst method is the naive method. This has the same
asymptotic behaviour as the orbit sum method, but with the overhead of
keeping track of the orbit eliminated. It is thus faster, but is still
linear in $|G|$.

We see a significant improvement when we use the Kronecker
method. Note that $\rho_n$ has degree $n$, and this is very small
compared to the size of the group, $n!$. This means that the matrices
we are working with are not very large, so the Kronecker products are
also not too large.

We can observe a similar pattern in two of the methods used to
implement Serre's algorithm:

\begin{enumerate}
  \item Directly computing the projections $p_{\alpha\alpha}$ by
    summing over $G$ (referred to as the ``naive'' method)
  \item Computing $p$ as the projection to the trivial subspace of
    $\rho_i^* \otimes \rho$, using the group sum trick from
    Section \ref{sec:groupsum} (referred to as the ``Kronecker'' method).
\end{enumerate}

\begin{figure}[H]
  \centering
  \input{graphs/serre_symmetric_degree.pgf}
\end{figure}

We observe that the Kronecker is superior to the naive method in this
case, where the degree of the representation is small and the group is
large, as before.

Next, we consider the regular representations of $\mathbb{Z}_n$, the
group of integers modulo $n$. $\mathbb{Z}_n$ is an abelian group, so
all irreducibles are of dimension $1$, and appear in the regular
representation $V$ with multiplicity $1$. For the purposes of
demonstration, we will consider the tensor product $V \otimes V$, as
this has a larger degree, $n^2$.

\begin{figure}[H]
  \input{graphs/mymethod_cyclic_size.pgf}
\end{figure}

We observe that the Kronecker method is still the most efficient. This
is despite the small group size, which improves the running time of the
other methods, which sum over the group. Our conclusion is that the
Kronecker method is a fairly efficient method. We will compare it to
Serre's method later on.

In our implementation of Serre's method, we see the naive method
remain competitive. The cyclic group size is small enough that naively
summing over the group is faster than the the method using Kronecker
products and the group sum trick.

\begin{figure}[H]
  \input{graphs/serre_cyclic_size.pgf}
\end{figure}

This is demonstrates a special case where the naive method is fastest:
when the group is extremely small. In most other cases, the Kronecker
method will be better, as we saw when decomposing representations of
$S_n$.

We also want to test the dependency of the running times on the degree of
the representations, while keeping the size of the group constant. To
demonstrate, we consider the representations $\tau_n : \mathbb{Z}_3
\to \otimes^n V$ where $V$ is the 3 dimensional regular representation
of $\mathbb{Z}_3$.

\begin{figure}[H]
  \input{graphs/mymethod_tensor_degree.pgf}
\end{figure}

Again, we see that the Kronecker method is the fastest.

Next, we will explore the effect of parallelisation on our method (see
Section \ref{sec:myparallel} for more details) and Serre's method. We do not
expect to see a large improvement in cases where a representation is
made up of few irreducibles, so we will benchmark the methods on the tensor
$V \otimes V$ of the regular representation $V$ of $\mathbb{Z}_n$
described earlier, since these representations have many irreducibles.

To parallelise our method, we iterate over the list of irreducibles in
parallel: for each, we project to the canonical summand using Serre's
formula and run the original non-parallel method on the canonical
summand.

\begin{figure}[H]
  \input{graphs/mymethod_cyclic_kronecker_size.pgf}
\end{figure}

\begin{figure}[H]
  \input{graphs/mymethod_cyclic_naive_size.pgf}
\end{figure}

For both methods, we see a large improvement in some cases. The reason
for this is that there are $n$ irreducibles of $\mathbb{Z}_n$ over
$\mathbb{C}$, one for each $n$th root of unity. This means there are
$n$ canonical summands, so we could split the work between $n$
processors effectively. The laptop used to run these benchmark only
has 4 physical cores, so we do not observe the speedup we would in an
ideal scenario, where there is one CPU for each canonical summand.

Similarly to our method, we parallelise Serre's method by splitting
into canonical summands and running the original method on each
summand.

\begin{figure}[H]
  \input{graphs/serre_cyclic_kronecker_size.pgf}
\end{figure}

\begin{figure}[H]
  \input{graphs/serre_cyclic_naive_size.pgf}
\end{figure}

For these examples, the overhead of parallelising is too great and we
do not see an improvement in running time for these small examples.

It is worth precisely how the parallelisation works, since
this reveals why there is no speedup. We use the GAP function {\tt
  ParListByFork} to process lists in parallel. This function forks
child processes, performs the computation, then communicates the
results over a pipe to the parent, which builds the result list from
each child's result. The time this takes is highly dependent on the
performance characteristics of the OS {\tt fork} system call, IPC
(interprocess communication) performance, and various GAP
serialisation functions. Ideally, we would parallelise using more
finely-grained concurrency primitives to avoid the need for any IPC,
but none are available in GAP. When profiling, we found that a large
amount of running time is spent in functions such as {\tt IO\_Pickle} and
{\tt IO\_Unpickle}, which are serialisation functions, used to
transmit results between processes.

Finally, we compare our method to Serre's method. For our method, we
will use the paralellised Kronecker product method, since this was the
fastest in all cases. For Serre's method, we will benchmark both the
naive and Kronecker product methods. This is because the naive method
is faster for small groups, while the Kronecker method is faster for
large groups.

We benchmarked the algorithms for a random selection of $50$ test
cases, with group sizes from $1$ to $200$ and the degree being less
than $10$.

\begin{figure}[H]
  \input{graphs/vs_random_size.pgf}
\end{figure}

Serre's method with Kronecker products performed extremely poorly in
most cases, it was never the best method for any representation
tested.

Our method with Kronecker products was the fastest method 35 times,
while Serre's method with naive summing was the fastest 15 times.

Next, we benchmarked the three methods on the defining representation
of the symmetric group.

\begin{figure}[H]
  \input{graphs/vs_symmetric_size.pgf}
\end{figure}

This time, we see that Serre's naive method is the worst - this is due
to the large size of the symmetric groups. Our method is again the
best.

Lastly, for an example where our method is not the best, we
benchmarked the tensor product of the regular representation of
$\mathbb{Z}_n$.

\begin{figure}[H]
  \input{graphs/vs_cyclic_size.pgf}
\end{figure}

Here, Serre's naive method is the best: its running time is dominated by
the size of the group which is small. Our method has a running time
dominated by a polynomial in the degrees of the representations, which
grow quadratically ($V \otimes V$ has dimension $n^2$) with the size
of the group.

We can conclude that for small groups, Serre's naive algorithm has
good performance. For larger groups, our Kronecker method usually has
good performance. In general, it is hard to say anything more concrete
than this, since performance depends on too many factors - it is not
just the size of the group that affects running time. We have different
numbers of conjugacy classes, special cases in GAP algorithms for
certain groups (e.g. integers, symmetric groups), different internal
representations of certain groups (e.g. polycyclic, presentations) and
so on.

\section{An application: Bounding the crossing number of $K_{m,n}$} \label{sec:crossing}

Being able to efficiently decompose representations of finite groups
leads to a wide variety of possible applications. In semidefinite
programming, it is common in several applications that a program has
symmetries which can be expressed as a group action and used to
greatly reduce the dimension of the problem. In this section, we
present a method due to de Klerk et al. \cite{deKlerk2007} for
reducing the dimension of such semidefinite programs. We also apply
this method to the problem of computing a lower bound for the crossing
number of the complete bipartite graph.

While the reduction in the dimension of the semidefinite program is
done by us in the same way as de Klerk et al. \cite{deKlerk2007}, the
key difference is that we can now optimally block diagonalise the
representation of the action and thus the centraliser. This is an
improvement over the original method by de Klerk et al.
\cite{deKlerk2004}, in which no general method of optimally block
diagonalising is given.

\subsection{Motivation}

A \emph{complete bipartite graph} $K_{m,n}$ is a graph that can be
partitioned into two sets. One with $m$ vertices, one with $n$
vertices, and each vertex in one set connected with an edge to each
vertex in the other set, with no other edges.

In 1954, Zarankiewicz published a proof that the crossing number of
$K_{m,n}$, $\mbox{cr}(K_{m,n}) = \lfloor \frac14(m-1)^2 \rfloor \lfloor
\frac14 (n-1)^2 \rfloor$ \cite{Zar1955}. His argument contained an
error, meaning the proof was only valid for $K_{3,n}$. Other proofs
have been published, proving the conjecture for $\min(m,n) \leq 6$
\cite{Kleitman} and $m \in \{7,8\}$ and $7 \leq n \leq 10$
\cite{Woodall}.

The truth of the conjecture is not known in general, but bounds on
$\mbox{cr}(K_{m,n})$ are known. Let $Z(m,n) = \lfloor \frac14(m-1)^2
\rfloor \lfloor \frac14 (n-1)^2 \rfloor$.

\begin{center}
\begin{figure}[h]
\begin{center}
\begin{tikzpicture}
  \Vertex[x=0,y=2]{A}
  \Vertex[x=0,y=0]{B}
  \Vertex[x=0,y=-2]{C}

  \Vertex[x=4,y=1]{1}
  \Vertex[x=4,y=-1]{2}

  \Edge(A)(1)
  \Edge(A)(2)
  \Edge(B)(1)
  \Edge(B)(2)
  \Edge(C)(1)
  \Edge(C)(2)
\end{tikzpicture}
\hspace{1em}
\begin{tikzpicture}
\Vertex[x=0,y=2]{A}
\Vertex[x=0,y=1]{B}
\Vertex[x=0,y=-1]{C}

\Vertex[x=1,y=0]{1}
\Vertex[x=-1,y=0]{2}

\Edge(A)(1)
\Edge(A)(2)
\Edge(B)(1)
\Edge(B)(2)
\Edge(C)(1)
\Edge(C)(2)

\end{tikzpicture}
\end{center}
\caption{Two drawings of $K_{3,2}$. The second has $Z(3,2) = 0$ crossings.}
\end{figure}
\end{center}

De Klerk et al. \cite{deKlerk2007} showed that:

\[
\lim_{n \to \infty} \frac{\mbox{cr}(K_{m,n})}{Z(m,n)} \geq 0.8594 \frac{m}{m-1}
\]

This is the result that we will reproduce in this section. Our
improvement is that, due to a reduction in the sizes of blocks of the
matrices we compute with, the computation is more efficient.

\subsection{Reducing semidefinite programs}

A more detailed presentation of this method, with full proofs of
correctness, can be found in the paper by de Klerk, Pasechnik and
Schrijver \cite{deKlerk2007}.

Suppose we are given a semidefinite program:

\newcommand{\tr}{\mbox{tr}}

\begin{equation} \label{eqn:unreduced}
\mbox{min}\{ \tr(CX) \mid X \;\mbox{positive semidefinite}, X \geq 0, \tr(A_j
X) = b_j \;\mbox{for}\; j = 1, \ldots, m \}
\end{equation}

Suppose additionally that we have a finite group $G$ acting on a
finite set $Z$ with $\rho$ being the permutation representation of this
group action, i.e. $\rho(g)$ is the matrix with rows and columns
indexed by $Z$ with $\rho(g)_{xy} = 1$ iff $gx = y$ and $0$
otherwise. We also require that the $C$ and $A_j$ matrices in
(\ref{eqn:unreduced}) are elements of the centraliser ring of this
representation, meaning they commute with all $\rho(g)$.

Let $\{E_1, \ldots, E_d\}$ be the orbital matrices for $G$ acting on
$Z$, and for each $i$, $B_i = \frac{E_i}{\Vert E_i \Vert}$, where the
inner product is given by $\langle X, Y \rangle = \tr(XY^*)$. Define
$L_k$ such that $B_k B_j = \sum_i (L_k)_{ij} B_i$. This is possible
since the centraliser ring (spanned by the $E_i$) is closed under
multiplication.

A key result of de Klerk et al. \cite{deKlerk2007} is that the program
(\ref{eqn:unreduced}) is equivalent to the following program:

\begin{align*}
\mbox{min} \left\{ \sum_{i=1}^d \tr(CB_i)x_i \middle\vert \right.&\sum_{i=1}^d x_i L_i
\;\mbox{positive semidefinite}, x_i \geq 0 \;\mbox{for}\; i = 1,
\ldots, d, \\
&\left. \sum_{i=1}^d \tr(A_j B_i)x_i = b_j \;\mbox{for}\; j = 1, \ldots, m \right\}
\end{align*}

The number of variables $x_i$ is $d$, the dimension of the centraliser
ring: this is in most cases a significant reduction from the number of
variables that need to be considered in (\ref{eqn:unreduced}).

Another reduction in complexity comes from the fact that we only
condition on positive semidefiniteness of matrices of the size of the
$L_i$, which are $d \times d$ matrices. Compared with
(\ref{eqn:unreduced}), which constrains on the positive
semidefiniteness of $X$, a $Z \times Z$ matrix, this can be a large
saving. As an example, in this section we will consider a case where
$|Z| = 6! = 720$ while $d = 78$.

De Klerk et al. \cite{deKlerk2007} note further that we can always
find a symmetric matrix exhibiting the optimal value. Recall that each
orbital $\Delta_i$ has a paired orbital $\Delta_{i^*}$ with all of the
pairs swapped. This corresponds to the adjacency matrices being
transpose: $E_i = E_{i^*}^T$. A consequence is that we can perform the
restriction to symmetric matrices by adding the constraints $x_i =
x_{i^*}$ for each $i$.

We can apply this to crossing numbers as follows.

Let $G = S_m \times S_2$ act on $Z_m$, the set of $m$-cycles in $S_m$
by: $(\sigma, e) \cdot \rho = \sigma \rho \sigma^{-1}$ and $(e, \tau)
\cdot \rho = \rho^{\text{sign}(\tau)}$.

Define a matrix $C$ in $\mathbb{R}^{Z_m \times Z_m}$ by $C_{\sigma,
  \tau} = $ the minimum number of adjacent interchanges required to
transform $\sigma$ to $\tau^{-1}$. An adjacent interchange is a swap
of elements that are next to each other when the cycle is written
down. For example, $(1423)$ and $(4123)$ differ by an adjacent
interchange. Then $C$ is in the centraliser of the $G$-action. See
\cite{deKlerk2004} for more details on the matrix $C$.

Then define a constant $\alpha_m$ by:
\[
\alpha_m := \mbox{min}\{\mbox{tr}(CX) \mid X \in \mathbb{R}^{Z_m
  \times Z_m}, X \mbox{positive semidefinite}, \mbox{tr}(JX) = 1\}
\]

where $J$ is the all-one matrix.

de Klerk et al. proved \cite{deKlerk2007} that:

\[
\mbox{cr}(K_{m,n}) \geq \frac{m(m-1)}{k(k-1)}\left(\frac12 n^2 \alpha_k -
\frac12 n \left\lceil \frac14 (k-1)^2 \right\rceil\right)
\]

for all $n$ and $k \leq m$, which implies:

\[
\lim_{n \to \infty} \frac{\mbox{cr}(K_{m,n})}{Z(m,n)} \geq
\frac{8\alpha_k m}{k(k-1)(m-1)}
\]

for all $n$ and $k \leq m$.

We proceeded by reducing the semidefinite program defining $\alpha_m$
using the method due to de Klerk et al. \cite{deKlerk2007}, then block
diagonalising the representation using our algorithm (using naive
summing) described in Section \ref{sec:lowmem}. We were forced into using this
method since the degrees of the representations are large: the
Kronecker product methods quickly ran out of memory.

We provided our algorithm with a complete list of irreducibles of $S_n$
using Young tableaux. This algorithm was not implemented in GAP, so we
used the implementation from Sage. The reason for using this algorithm
over GAP's generic Dixon algorithm is that computing with Young
tableaux is much faster and allows us to produce integer matrices:
sidestepping any issues involving cyclotomic fields (see
Section \ref{sec:gram} for why this is important).

\subsection{Results}

These results were computed on a laptop with an Intel Core i7-4720HQ
CPU running at 2.60GHz, with 12 GB of memory. The results for
$\alpha_m$ for $m \in \{5,7\}$ match approximately those obtained in
earlier papers \cite{deKlerk2004} \cite{deKlerk2007}. Values for
$\alpha_m$ for $m \geq 8$ were not computed here due to the time
required, but values for $m=8,9$ were computed by de Klerk et al.
\cite{deKlerk2007}.

The computation took less than a second for $m=5$, 10 seconds for
$m=6$ and 1.5 hours for $m=7$. There is much room for improvement on
these computation times, but optimising this problem was not the goal
of this project - we merely aim to prove that our algorithm works in a
real-world scenario.

\begin{table}[h]
\begin{center}
\begin{tabular}{|l|l|}
  \hline
  $m$ & $\approx \alpha_m$ \\
  \hline
  5 & 1.9472133720059 \\
  6 & 2.9519170848593 \\
  7 & 4.3693933617464 \\
  \hline
\end{tabular}
\end{center}
\caption{Results for $5 \leq m \leq 7$}
\end{table}

These were produced by running the script {\tt ./run\_crossing.sage m}
where {\tt m} is the value of $m$.

To demonstrate the efficiency of our block diagonalisation, we can
examine the decomposition of the $m=7$ representation. $m=7$ has
degree $720$ (it acts on the set of 7-cycles, of which there are
$6!$), and decomposes into blocks as follows:

\begin{table}[h]
\begin{center}
\begin{tabular}{|l|l|}
  \hline
  Block Size & Number of Blocks \\
  \hline
  1 & 2 \\
  14 & 8 \\
  15 & 6 \\
  20 & 2 \\
  21 & 6 \\
  35 & 10 \\
  \hline
\end{tabular}
\end{center}
\caption{Block sizes for $m=7$}
\end{table}

Notice that the largest block size is 35, considerably smaller than
720, meaning our method has been effective in reducing the sizes of
matrices we are required to compute with.

\section{Conclusions and future work} \label{sec:conclusion}
\subsection{Conclusions}

We have satisfied all of the requirements set out in
Section \ref{sec:introduction}. To summarise how we achieved this:

We provide two functions to decompose a representation: {\tt
  IrreducibleDecompositionCollected}, which returns a list of lists of
irreducibles. The irreducibles are collected into lists according to their
isomorphism class, so all spaces appearing in a list are isomorphic as
representations. {\tt IrreducibleDecomposition} is the flattened
version, a list of irreducibles. In this case, each irreducible is given as a
subspace of $V$, in the form a of a GAP vector space.

To compute the block diagonalisation of $\rho$, we provide the
function {\tt BlockDiagonalRepresentation}, which converts a
representation $\rho$ to a block diagonal one $\tau$. $\tau$ is given
as a GAP homomorphism from the group to a matrix group. The matrix we
conjugated by to get $\tau$ is given by {\tt
  BlockDiagonalBasisOfRepresentation} applied to $\rho$.

To compute a basis for the centraliser ring $C_\rho$, we provide the
function {\tt CentralizerOfRepresentation}, which gives the matrices
spanning $C_\rho$ as a vector space, written in the same basis as
$\tau$. We also provide {\tt CentralizerBlocksOfRepresentation}, which
gives the same matrices, but as lists of blocks rather than full
matrices.

Our property-based testing method is described in Section \ref{sec:test}. We
found that this method was superior to manually writing a complete set
of examples, since it caught more errors. Since our algorithm applies
to representations of any group, it is almost impossible to think of
examples covering every possible case: solvable/unsolvable,
abelian/nonabelian, cyclic/not cyclic, nilpotent/not nilpotent, and so
on. Through the random generation of large numbers of test cases, we
discovered edge cases where one of our assumptions broke down. A
specific example was encountered when we were testing the centraliser
trick (see Section \ref{sec:use_basis}). Originally we only tested on unitary
representations and did not realise our assumption that
representations were unitary - after implementing randomised testing,
this was quickly detected and resolved.

Lastly, we wrote an extensive GAP package manual, documenting all
functions, arguments, preconditions, return values and so on. We also
made sure to include comments on which algorithms are best in which
cases (derived from experiments conducted in Section
\ref{sec:performance}). The source code is also heavily commented,
hopefully allowing future contributions without much difficulty.

\subsection{Future Work}

While we have succeeded in fulfilling the requirements, there are
still more improvements and additions that could be made.

We could have written an implementation of Dixon's algorithm for
decomposing unitary representations, discussed in the appendix
\ref{sec:unitary}. This algorithm avoids the need for a complete list
of irreducibles to be provided beforehand, which is a property none of the
other algorithms have and warrants further exploration. Once we factor
in the computation time for the list of irreducibles, it is possible that we
would discover cases where Dixon's algorithm is the best. This task
was out of scope for this project, but could form the basis for future
contributions to GAP or Sage.

In terms of performance analysis, there is much more that could be
done. For example, we could have written tools similar to
KCachegrind\footnote{\url{
    http://kcachegrind.sourceforge.net/html/Home.html}} to produce
graphs and analyse the call graphs of our functions in more
detail. The analyses we did perform were done with the aid of GAP's
profiling tools, which are not as advanced as the tools included with
valgrind\footnote{\url{http://valgrind.org/}}, for example.

We did not profile and analyse memory usage, as this was not a primary
concern compared to running time. This is another area of improvement:
we did not heavily optimise for memory usage, instead opting to trade
memory for a better running time wherever possible. This is most
apparent in our usage of Kronecker products, which grow the size of
the matrices we are computing with from $n \times n$ to $n^2 \times
n^2$. This generally improves running time, but memory usage
suffers. We saw this in Section \ref{sec:crossing}, where the
Kronecker product methods were too memory intensive when $n=720$, and
could not be used. In some cases, we may have been able to use sparse
matrices to improve memory usage, since some representations have
matrices which have mostly zeroes as entries. One GAP package that
implements this is {\tt
  Gauss}\footnote{\url{https://www.gap-system.org/Packages/gauss.html}},
which implements a sparse matrix data structure that only keeps track
of the non-zero entries. We could also use sparse matrix data
structures from
SageMath\footnote{\url{http://doc.sagemath.org/html/en/reference/matrices/sage/matrix/matrix_sparse.html}}
or
SciPy\footnote{\url{https://docs.scipy.org/doc/scipy/reference/sparse.html}},
but this would require implementing parts of our algorithm in SageMath
to take advantage of the interoperation between GAP and Python
libraries.

\subsection{Project Availability}

All code associated with the project is currently available at
\url{https://gitlab.com/kaashif/decomp}. The GAP package {\tt
  RepnDecomp}, containing all functionality implemented as a part of
this project, will be submitted for review and will become available
at \url{https://www.gap-system.org/Packages/packages.html} as a
deposited package. This means that it will be included in future
releases of the GAP distribution.

\appendix
\section{Algorithms for unitary representations} \label{sec:unitary}

During the course of the research for this project, we discovered
several interesting algorithms that apply to the special case when a
representation is unitary. As we have seen in Section \ref{sec:use_basis},
unitary representations have desirable properties that allow us to
perform optimisations that are not possible in general.

In this section, we describe our method to compute a unitary
representation isomorphic to a given representation.

Even further, there is an algorithm due to Dixon \cite{dixon_1970}
that allows the decomposition of a unitary representation into
irreducibles \emph{without} the need to have a complete list of
irreducibles beforehand.

\subsection{Unitarising representations}

Given a representation $\rho : G \to \GL(V)$, define:

\[
S = \sum_{g \in G} \rho(g)\rho(g)^*
\]

To clarify: here, $\rho(g)^*$ means the conjugate transpose of
$\rho(g)$. This is not the same thing as the dual representation,
which we denoted by $\rho^*(g)$, which means $\rho(g^{-1})^T$, the
transpose of $\rho(g^{-1})$. The dual representation does \emph{not}
appear in this section, $^*$ always means ``conjugate transpose''.

If $\rho$ is already unitary, then $S$ is a scalar matrix. Our
strategy is essentially to try to find a change of basis such that $S$
is a scalar matrix, then we will see that this means $\rho$, after
this change of basis, is unitary.

Notice that $S$ is Hermitian, since:

\[
S^* = \sum_{g \in G} (\rho(g)\rho(g)^*)^* = \sum_{g \in G}
\rho(g)\rho(g)^* = S
\]

Notice also that $\rho(g)S\rho(g)^* = S$ by relabelling:

\[
\rho(g)S\rho(g)^* = \sum_{t \in G} \rho(g)\rho(t)\rho(t)^*\rho(g)^* =
\sum_{g \in G} \rho(gt)\rho(gt)^* = \sum_{g \in G} \rho(g)\rho(g)^* = S
\]

We can rephrase the definition of $S$ in terms of a tensor product. We
do this with the goal of using the group sum trick (from
Section \ref{sec:groupsum}) used extensively in our other algorithms.
\begin{align*}
S_{ij} &= \sum_{g \in G} (\rho(g)\rho(g)^*)_{ij} \\
&= \sum_{g \in G} \sum_k \rho(g)_{ik}(\rho(g)^*)_{kj} \\
&= \sum_{g \in G} \sum_k \rho(g)_{ik}(\overline{\rho(g)})_{jk} \\
&= \sum_k \sum_{g \in G} (\rho(g) \otimes \overline{\rho(g)})_{ikjk} \\
&= \sum_k \left(\sum_{g \in G} (\rho(g) \otimes \overline{\rho(g)})\right)_{ikjk} \\
\end{align*}
where $(A \otimes B)_{xyij}$ refers to the $(i,j)$ entry in the
$(x,y)$ block. A tensor product naturally has this block structure,
with each block given by $A_{xy}B$.

The summation $\sum_{g \in G} (\rho(g) \otimes \overline{\rho(g)})$
has summands given by a homomorphism $g \mapsto \rho(g) \otimes
\overline{\rho(g)}$, so we can use the fast group sum trick.

Given a Hermitian matrix $A$, we can decompose $A$ into $A =
LDL^*$ where $L$ is lower triangular with all $1$ on the diagonal, and
$D$ is real, diagonal. We can use the following formulas:
\begin{align*}
D_j &= A_{jj} - \sum_{k=1}^{j-1} L_{jk} L_{jk}^* D_k \\
L_{ij} &= \frac1{D_j} \left(A_{ij} - \sum_{k=1}^{j-1} L_{ik} L_{jk}^*
D_k \right) \text{for } i > j
\end{align*}

We can compute this decomposition for $S$, let $S = LDL^*$. Now:
\[
D = L^{-1}LDL^* (L^*)^{-1} = L^{-1}\rho(g)LDL^*\rho(g)^*(L^*)^{-1} = (L^{-1}\rho(g)L)D(L^{-1}\rho(g)L)^*
\]

Since $D$ is real, we can take its square root. In fact, $D$ has
positive entries, since $S$ is positive definite, which means
$\sqrt{D}$ still has real entries. This lets us do the following:
\begin{align*}
I &= \sqrt{D}^{-1}(L^{-1}\rho(g)L)\sqrt{D}
\sqrt{D}(L^{-1}\rho(g)L)^*\sqrt{D}^{-1} \\
&= ((L\sqrt{D})^{-1}\rho(g)L\sqrt{D})
((L\sqrt{D})^{-1}\rho(g)L\sqrt{D})^* \\
&= \tau(g)\tau(g)^*
\end{align*}

where we define $\tau(g) = (L\sqrt{D})^{-1}\rho(g)L\sqrt{D}$. This
shows that $\tau$ is unitary.

$S$ is invertible since it is a sum of positive definite Hermitian
matrices. This means $L$ is invertible and allows us to define $\tau$
as above. A formula for $L^{-1}$ is given by: $L^{-1} = DL^*S^{-1}$.

The main drawback of this method is that it requires computing square
roots. As we have seen in Section \ref{sec:gram}, square rooting means the
minimal cyclotomic extension required to contain all coefficients can
be very large. This blowup in degree will cause a corresponding blowup
in memory usage and computation time.

\subsection{Decomposing unitary representations}

We describe an algorithm due to Dixon \cite{dixon_1970} for
decomposing unitary representations into irreducibles. A complete
proof of correctness and explanation can be found there, we will focus
on describing the algorithm.

In principle, combined with the method for unitarising a
representation, this gives a way to find the complete irreducible
decomposition of a representation without needing the complete list of
irreducibles.

In practice, this algorithm has drawbacks which will become clear. We
did not implement this algorithm.

The first step is finding a nonscalar, Hermitian element $H$ of the
centraliser ring $C_\rho$. If $E_{rs}$ denotes the matrix with $1$ in
the $(r,s)$ position and $0$ everywhere else, then define:

\[
H_{rs} = \begin{cases}
  E_{rr} & \text{if } r = s \\
  E_{rs} + E_{sr} & \text{if } r > s \\
  i(E_{rs} - E_{sr}) & \text{if } r < s
\end{cases}
\]

For each $r, s$, compute:

\[
H = \frac1{|G|} \sum_{g \in G} \rho(g)^* H_{rs} \rho(g)
\]

$H$ is Hermitian and commutes with the action of $G$ given by
$\rho$, so is an element of the centraliser. Notice that the $H_{rs}$
give a Hermitian basis for $M_n(\mathbb{C})$.

If $H$ is scalar for all $r, s$, by Schur's lemma, this means $\rho$
is in fact irreducible, so we can stop here.

Otherwise, for some $r, s$, $H$ will be nonscalar. $H$ is conjugate
symmetric, hence diagonalisable. We can find an orthonormal
eigenvector basis for $V$ by diagonalising (using e.g. a Jordan form
decomposition), $J = P^{-1}HP$, then orthonormalising the columns of
$P$ using the Gram-Schmidt process. $P$ must be unitary for the
subrepresentations we will find to be unitary.

The action of $G$ preserves the eigenspaces of $H$, so the block
diagonalisation given by $J$ is in fact a decomposition of $V$ into
$G$-invariant subspaces, giving the unitary subrepresentations.

We then recurse until all representations become irreducible.

The downside to this algorithm is the we require $P$ to be unitary. If
it is not, then the subrepresentations are not unitary and we cannot
recurse. Computing $P$ requires an orthonormalisation process which,
when we require exact coefficients, has problematic performance
characteristics when dealing with exact cyclotomic fields (as seen in
Section \ref{sec:gram}).

Another problem is that computing $H$ requires a summation over
$G$. The reason this algorithm is of interest is that it doesn't
require the computation of a complete list of irreducibles for $G$,
which could be intractable if $G$ is large and complex enough. But if
$G$ is large, summing over $G$ is also undesirable.

Lastly, in real world cases, we often want to decompose permutation
representations $\rho : G \to S_n$. These representations are already
unitary, so it may seem as if this algorithm will be a good
choice. However, using Young tableaux, a complete list of irreducibles of
$S_n$ can be computed very easily (as is done using SageMath in
Section \ref{sec:crossing}), while for large $n$, summing over $S_n$
(as required by the unitary algorithm) will be slow.

For these reasons, this algorithm was not the focus of this project.

\bibliographystyle{unsrt}
\bibliography{final}

\end{document}